\theoremstyle{definition}
\theoremstyle{definition}
\theoremstyle{definition}
\theoremstyle{plain}
\newtheorem{theorem}{Theorem}
\newtheorem{proposition}[theorem]{Proposition}
\newtheorem{corollary}{Corollary\hskip-0.5pt}
\theoremstyle{plain}
\newcommand\com[1]{}
\newcommand\Cc{{\let\mathcal\mathscr\mathcal C}}
\newcommand\D{{\mathcal D}}
\newcommand\E{\mathcal{E}}
\newcommand\g{{\frak g}}
\newcommand\La{\Lambda}
\newcommand\N{{\mathbb N}}
\newcommand\op[1]{\mathop{\rm #1}\nolimits}
\newcommand\ot{\otimes}
\newcommand\R{{\mathbb R}}
\newcommand\vp{\varphi}
\newcommand\Z{{\mathbb Z}}
\newcommand{\EW}{Einstein-Weyl }
\newcommand{\MS}{\mathcal{E}}
\newcommand{\sym}{\mathfrak{g}}
\def\presuper#1#2%
\begin{document}

 \title[Differential invariants of 3D Einstein-Weyl structures]{Differential invariants of \\ Einstein-Weyl structures in 3D}
 \author{Boris Kruglikov, Eivind Schneider}
 \date{}
\address{\hspace{-17pt} Department of Mathematics and Statistics, UiT the Arctic University of Norway, Troms\o\ 90-37, Norway.\newline
E-mails: {\tt boris.kruglikov@uit.no, eivind.schneider@uit.no}. }
 \keywords{Differential Invariants, Invariant Derivations, Einstein-Weyl equation, Hilbert polynomial, Poincar\'e function, Lax pair, twistor theory}

 \vspace{-14.5pt}
 \begin{abstract}
Einstein-Weyl structures on a three-dimensional manifold $M$ is given by a system $\mathcal E$ of PDEs on sections of a bundle over $M$. This system is invariant under the Lie pseudogroup $\mathcal G$ of local diffeomorphisms on $M$.
Two Einstein-Weyl structures are locally equivalent if there exists a local diffeomorphism taking one to the other.
Our goal is to describe the quotient equation $\mathcal E/\mathcal G$ whose solutions correspond to nonequivalent
Einstein-Weyl structures. The approach uses symmetries of the Manakov-Santini integrable system
and the action of the corresponding Lie pseudogroup.
 \end{abstract}

 \maketitle


 \section*{Introduction}

A Weyl structure is a pair consisting of a conformal metric $[g]$ on a manifold $M$ and a symmetric linear connection
$\nabla$ preserving the conformal structure. This means
 \begin{equation}\label{E1}
\nabla g = \omega\otimes g
 \end{equation}
for some one-form $\omega$ on $M$ \cite{W}.
The Einstein-Weyl condition says that the symmetrized Ricci tensor of $\nabla$ belongs to the given conformal class:
 \begin{equation}\label{E2}
Ric^{\text{sym}}_\nabla=\Lambda g
 \end{equation}
for some function $\Lambda$ on $M$.
We call the pair $([g],\nabla)$ an \EW structure if it satisfies this \EW equation.

In this paper we restrict to three-dimensional manifolds. This is the first non-trivial case,
which is simultaneously the most interesting due to its relation with dispersionless
integrable systems \cite{Calderbank,FK}.
In addition, in dimension 3 the Einstein equation is trivial, meaning that all Einstein manifolds are space forms,
while the \EW equation is quite rich. The Einstein-Weyl equation has attracted a lot of attention due to its relations with
twistor theory, Lax integrability of PDE and mathematical relativity \cite{H,JT,DMT}.
It is worth mentioning that according to \cite{Cartan} the solution spaces of a third-order
scalar ODE with vanishing W\"unschmann and Cartan invariants carry a natural \EW structure.
We aim to solve the local equivalence problem for \EW structures in 3D.

The Einstein-Weyl equation is invariant under the Lie pseudogroup of local diffeomorphisms of $M$.
To construct the quotient of the action of this pseudogroup on the space of \EW structures we compute
the algebra of differential invariants, thus following the approach to the equivalence problem
as presented in \cite{T,ALV,O}.

We begin with general coordinate-free considerations in Section~\ref{S1}; this concerns conformal structures
of any signature. 
Then in Section \ref{S2} we specialize to the normal form of the pair $(g,\omega)$ introduced in \cite{DFK},
which expresses \EW structures locally by solutions of the modified Manakov-Santini system \cite{MS};
this is specific for the Lorentzian signature.
It will be demonstrated in Section \ref{S2} that the symmetry algebra of this PDE system coincides with the
algebra of shape preserving transformations for the metric in normal form \eqref{shape}.
Consequently, the problem is equivalent to computing differential invariants of the modified Manakov-Santini system
with respect to its symmetry pseudogroup.

In both cases we compute generators of the algebra of scalar rational differential invariants and derive the Poincar\'e function
counting the local moduli of the problem.
Section \ref{S1} and Sections \ref{S2}-\ref{S3}, supporting two different approaches to the same problem, can be read independently,
and the reader interested in geometry of the Manakov-Santini system can proceed straightforwardly to the latter sections.
Section \ref{S4} provides some particular solutions of the Manakov-Santini system, yielding several families of
non-equivalent \EW spaces parametrized by one or two functions of one argument. We stress that these explicit \EW structures
are non-homogeneous and not obtained by any symmetry reduction.
Appendix \ref{algebraicity} is devoted to the proof of a general theorem on algebraicity of the symmetry pseudogroup.

 \section{Differential invariants of \EW structures}\label{S1}

In this section we discuss the general coordinate-free approach to computation of differential invariants of \EW structures in 3D. The conformal structure can be both of Riemanian and Lorentzian signature.
We refer to \cite{O,KLV,KL1} for the basics of jet-theory, Lie pseudogroups and differential invariants.

\subsection{Setup of the problem}\label{S1.1}

Let a Lie pseudogroup $G$ act on the space of jets $\mathcal{J}$ or a differential equation considered as a co-filtered submanifold in it (also know as diffiety);
we keep the same notation for the latter in this setup.

A differential invariant of order $k$ is a smooth function $I$ on $\mathcal{J}_k$ that is constant on orbits of the $G$-action.
If the pseudogroup $G$ is topologically connected (the same as path-connected), then the definition of differential invariant
is equivalent to the constraint $L_{X^{(k)}}I=0$ for every $X$ in the Lie algebra sheaf $\g$ corresponding to $G$,
where $X^{(k)}$ denotes the prolongation of the vector field $X$ to $k$-jets.

It turns out that in our problem, the pseudogroup $G$, the space $\mathcal{J}$ and the action are algebraic in
the sense of \cite{KL2} (for the data in this section this follows from the definition, and for the objects
in the following sections it follows from a general theorem in the appendix).
Moreover, the action of $G$ is transitive on the base and $\mathcal{J}$ is irreducible. Under these conditions,
the global Lie-Tresse theorem \cite{KL2} implies that the space of rational differential invariants is finitely
generated as a differential field, i.e.\ there exist a finite number of differential invariants and
invariant derivations that algebraically generate all other invariants.
In addition, the theorem states that differential invariants separate orbits in general position, thus solving
the local equivalence problem for generic structures.

In our work the pseudogroup $G$ (and later $\mathcal{G}$) will be connected in the Zariski topology.
In this case the condition that a rational function $I$ is a differential invariant
is equivalent to the constraint $L_{X^{(k)}}I=0$ for every $X$ from the Lie algebra sheaf $\g$ of $G$.

Weyl structures are given by triples $(g,\nabla,\omega)$ satisfying relation \eqref{E1}. Let us note that essentially
two of the structures are enough to recover the third one. Indeed, $g$ and $\nabla$ give $\omega$ by \eqref{E1}.
Also, $g$ and $\omega$ give $\nabla=\nabla^g+\rho(\omega)$, where
$2\rho(\omega)(X,Y)=\omega(X)Y+\omega(Y)X-g(X,Y)\omega^\sharp_g$. In coordinates this relates the Christoffel symbols
of $\nabla$ and the Levi-Civita connection $\nabla^g$:
 $$
\Gamma_{ij}^k=\gamma_{ij}^k+\tfrac12(\omega_i\delta_j^k+\omega_j\delta_i^k-g_{ij}\omega^k).
 $$
Finally, the same formula expresses $\nabla^g$ from $\nabla$ and $\omega$.
It is known that 
if $(M,g)$ is holonomy irreducible and admits no parallel null distribution, then $\nabla^g$ determines $g$
up to homothety. This recovers $[g]$ in this generic case.

It is not true though that $k$-jet of one pair correspond to $k$-jet of another representative pair,
the jets are staggered in this correspondence. In what follows we will restrict to equivalence classes of
pairs $(g,\omega)$: when the representative of $[g]$ is changed $g\mapsto f^2 g$,
the one-form also changes $\omega\mapsto\omega+2df/f$.

Thus the space of moduli of Weyl structures can be considered as the space $\mathcal{W}$ of pairs $(g,\omega)$
modulo the pseuodogroup $G=\op{Diff}_{\text{loc}}(M)\times C^\infty_{\neq0}(M)$ consisting of pairs $(\varphi,f)$
of a local diffeomorphism $\varphi$ and a nonzero function $f$. The action is clearly algebraic.

\subsection{Weyl structures}\label{S1.2}

The $G$-action has order 1, i.e.\ for any point $a\in M$ the stabilizer subgroup in $(k+1)$-jets $G_a^k$ acts
on the space $\mathcal{W}_a^k$ of $k$-jets of the structures at $a$.
For a point $a_k\in\mathcal{W}^k_a$ denote $\op{St}^{k+1}_{a_k}$ its stabilizer in $G^{k+1}_a$.
Let also $g_k=\op{Ker}(d\pi_{k,k-1}:T_{a_k}\mathcal{W}^k_a\to T_{a_{k-1}}\mathcal{W}^{k-1}_a)$ denote the symbol
of the space of Weyl structures. The differential group $G$ has the following co-filtration:
 $$
0\to\Delta_k\longrightarrow G^k_a \longrightarrow G^{k-1}_a\to 1,
 $$
where $\Delta_k=S^kT^*_a\otimes T_a\oplus S^kT^*_a$ for $k>1$, and we abbreviate $T_a=T_aM$.
For $k=1$, $G^1_a=\Delta_1=\op{GL}(T_a)\oplus T^*_a\oplus\R^\times$.

The 0-jet $a_0$ is the evaluation $(g_a,\omega_a)$. By $G^1_a$-action the second component can
be made zero, and the first component rescaled. The action of $GL(T_a)$ on the conformal class $[g_a]$ yields $\op{St}^1_{a_0}=CO(g_a)$.

The group $\Delta_2=S^2T^*_a\ot T_a\oplus S^2T^*_a$ acts on the symbol $g_1=T_a^*\ot S^2T^*_a\oplus T_a^*\ot T_a^*$ of $\mathcal{W}$.
This action is free and $g_1/\Delta_2=\La^2T^*_a$. This is the space where $Ric_\nabla^{\text{skew}}=\frac32d\omega$ \cite{JT} lives.
The stabilizer from the previous jet-level $CO(g_a)$ acts with an open orbit, i.e.\ there are no scalar invariants.
There are however the following vector and tensor invariants: $L^1=\op{Ker}(d\omega)$, $\Pi^2=L^1_\perp$ (generically $L^1$ is non-null and so transveral to $\Pi^2$)
and a complex structure $J=g^{-1}d\omega$ on $\Pi$, where the representative $g$ is normalized so that $\|d\omega\|^2_g=1$.
The stabilizer $\op{St}^2_{a_1}$ is either $SO(2)\times\Z_2$ or $SO(1,1)\times\Z_2$.

Starting from $k\ge2$ the action of $G^{k+1}_a$ on a Zariski open subset of $\mathcal{W}^k_a$ is free, i.e.\ the stabilizer is resolved:
$\op{St}^{k+1}_{a_k}=0$ for generic $a_k\in\mathcal{W}^k_a$.
This can be seen by the exact seqences approach as in \cite{LY}, and can be verified directly. The metric $g$ chosen with the above normalization
is the unique conformal representative, then $\omega$ is defined uniquely as well, and we can have the following canonical frame on $M$,
defined by a Zariski generic $a_2$: $e_1\in L^1$  normalized by $\omega(e_1)=1$, $e_2=\pi(\omega^\sharp_g)$ with $\pi:T_a\to\Pi^2$ being the orthogonal
projection along $L^1$, and $e_3=Je_2$. Coefficients of the structure $(g,\omega)$ written in this frame give a complete set of scalar rational differential invariants.

The count of them is as follows. Let $s_k$ be the number of independent differential invariants of order $\leq k$, which coincides with the transcendence degree of the
field of order $\leq k$ rational differential invariants.
Let $h_k=s_k-s_{k-1}$ be the number of ``pure'' order $k$ invariants. Then $h_0=h_1=0$ and $h_2=\dim g_2-\dim\Delta_3-\dim SO(2)=54-40-1=13$ and
$h_k=\dim g_k-\dim\Delta_{k+1}=9\binom{k+2}{2}-4\binom{k+3}2=\frac12(5k^2+7k-6)$ for $k>2$.
These numbers are encoded by the Poincar\'e function
 $$
P(z)=\sum_{k=0}^\infty h_kz^k=\frac{(13-9z+z^3)z^2}{(1-z)^3}.
 $$

\subsection{Einstein-Weyl structures}\label{S1.3}

The Einstein-Weyl equation \eqref{E2} is a set of 5 equations on 8 unknowns, which looks like an underdetermined system. However its $\op{Diff}_\text{loc}(M)$-invariance reduces the number of unknowns to 8-3=5 and makes it
a determined system -- formally this follows from the normalization of \cite{DFK}.

Denote this equation by $\mathcal{EW}$. The number of its determining equations of order $k$ is $5\binom{k}{2}$.
Let $\tilde{g}_k=\op{Ker}(d\pi_{k,k-1}:T_{a_k}\mathcal{EW}^k_a\to T_{a_{k-1}}\mathcal{EW}^{k-1}_a)$ be the symbol of the system.
Its dimension is $\dim\tilde{g}_k=\dim g_k-5\binom{k}{2}$.

The action of $G^{k+1}_a$ on $\mathcal{EW}^k_a$ is still free starting from $k\ge2$ and this implies
that the number of ``pure order'' $k$ invariants is: $\bar{h}_0=\bar{h}_1=0$, $\bar{h}_2=13-5=8$,
and $\bar{h}_k=h_k-5\binom{k}{2}=3(2k-1)$ for $k>2$. The corresponding Poincar\'e function is equal to
 $$
\bar{P}(z)=\sum_{k=0}^\infty\bar{h}_kz^k=\frac{(8-z-z^2)z^2}{(1-z)^2}.
 $$
We again have the canonical frame $(e_1,e_2,e_3)$, and this yields all scalar rational differential invariants of $\mathcal{EW}$.

 \section{Einstein-Weyl structures via an integrable system}\label{S2}

In this section we study the Lie algebra $\sym$ of point symmetries of the modified Manakov-Santini system $\MS$, defined by (\ref{MS}), which describes 
three-dimensional \EW structures of Lorentzian signature. We calculate the dimensions of generic orbits of $\sym$. The Einstein-Weyl structures corresponding to solutions of $\MS$ are of special shape (\ref{shape}), and we compute the Lie algebra $\mathfrak h$ of vector fields preserving this shape (ansatz).
It turns out that the lift of $\mathfrak h$ to the total space $E$ is exactly $\sym$, whence $\mathfrak h\simeq\sym$.

\subsection{A modified Manakov-Santini equation and its symmetry}\label{S2.1}

By \cite{DFK} any Lorentzian signature Einstein-Weyl structure is locally of the form
 \begin{equation}
\begin{aligned}
g &= 4 dt dx+2 u dt dy-(u^2+4v) dt^2 - dy^2 \\
\omega &= (u u_x+2 u_y+4v_x) dt - u_x dy
\end{aligned}\label{shape}
 \end{equation}
where $u$ and $v$ are functions of $(t,x,y)$ satisfying
 \begin{equation}
\begin{split}
F_1&=(u_t+u u_y+v u_x)_x- (u_y)_y=0, \\ F_2&=(v_t + v v_x-u v_y)_x - (v_y-2uv_x)_y=0.
\end{split} \label{MS}
 \end{equation}

This system, derived in the proof of Theorem 1 in \cite{DFK}, is related to the
Manakov-Santini system \cite{MS} by the change of variables $(u,v) \mapsto(v_x,u-v_y)$ and potentiation.
We will refer to it as the modified Manakov-Santini system.

Note that normalization of the coefficient of $dy^2$ in $g$ to be $-1$ gives a representative of the
conformal class $[g]$, reducing the $C^\infty_{\neq0}(M)$-component of the pseudogroup $G$ from the previous section.

Let $M=\R^3(t,x,y)$. We treat the pair $(g,\omega)$ as a section of the bundle
 \[\pi \colon E=M \times \mathbb R^2(u,v) \to  M .\]
This is a subbundle of $S^2 T^*M \oplus T^*M$, considered in Section \ref{S1}.

Einstein-Weyl structures correspond to sections of $\pi$ satisfying (\ref{MS}). Consider the system (\ref{MS}) as a nonlinear subbundle $\MS_2=\{F_1=F_2=0\}$ of the
jet bundle $J^2\pi$, and denote its prolongation by $\MS_k \subset J^k\pi$. The notation $\MS_0=J^0\pi=E$,
$\MS_1=J^1\pi$ will be used. Let $\MS \subset J^\infty \pi$
denote the projective limit of $\MS_k$.

The dimension of $J^k\pi$ is $3+2 \binom{k+3}{3}$, while the number of equations determining $\MS_k$ is $2 \binom{k+1}{3}$. The system $\E$ is determined, so these equations are independent, whence
 $$
\dim\MS_k = \dim J^k\pi-2\tbinom{k+1}{3} = 3+2(k+1)^2,\ k\ge2.
 $$
For $k=0,1$ we have $\dim\MS_0=5$, $\dim\MS_1=11$.

A vector field $X$ on $E$ is an (infinitesimal point) symmetry of $\MS$ if its prolongation $X^{(2)}$ to $J^2\pi$ is tangent to $\MS_2$, in other words if it satisfies the Lie equation
 $$
(L_{X^{(2)}}F_i)|_{\E_2}=0 \text{ for }i=1,2.
 $$
Decomposing this by the fiber coordinates of $\E_2\to E$, we get an overdetermined system of linear PDEs
on the coefficients of $X$. This system can be explicitly solved, and the result is as follows.

 \begin{theorem}\label{th:symmetries}
The Lie algebra $\sym$ of symmetries of $\MS$ has the following generators,
involving five arbitrary functions $a=a(t),\dots,e=e(t)$:
 \begin{align*}
X_1(a)&= a \partial_x+\dot a \partial_v\\
X_2(b)&= b \partial_y+\dot b \partial_u \\
X_3(c)&= y c \partial_x-2c \partial_u+(u c+y \dot c) \partial_v\\
X_4(d)&= d\partial_t +\frac12\dot d y \partial_y+\frac12(y\ddot d -u\dot d) \partial_u -\dot d v\partial_v\\
X_5(e)&= (y^2\dot e+2xe)\partial_x+ye\partial_y+(ue-3y\dot e)\partial_u+(y^2\ddot e+2yu\dot e+2ve+2x\dot e)\partial_v
 \end{align*}
Table \ref{CommutationRelations} shows the commutation relations of $\g$.
 \end{theorem}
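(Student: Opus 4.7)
The plan is to derive $\sym$ by the standard infinitesimal method: write a general vector field on the total space $E$, prolong it to $J^2\pi$, impose tangency to $\MS_2$, and integrate the resulting determining system.

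Concretely, I start with
\[
X=\tau\,\p_t+\xi\,\p_x+\e\,\p_y+\vp\,\p_u+\psi\,\p_v,
\]
where $\tau,\xi,\e,\vp,\psi$ are unknown smooth functions of $(t,x,y,u,v)$. Its prolongation $X^{(2)}$ to $J^2\pi$ is produced by the standard formulas for lifted coefficients in terms of total derivatives. Applying $X^{(2)}$ to $F_1$ and $F_2$ and restricting to $\MS_2$, which amounts to using $F_1=F_2=0$ to eliminate two chosen second-order derivatives (for instance $u_{yy}$ from $F_1$ and $v_{yy}$ from $F_2$), produces two expressions polynomial in the remaining independent 2-jet coordinates. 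Setting them identically to zero yields the determining system: a large overdetermined linear system of PDEs on the five coefficient functions.

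I then integrate the determining system in order of descending jet weight. The coefficients of the highest-degree monomials in the second jets force $\tau,\xi,\e,\vp,\psi$ to be polynomial of low degree in $u$ and $v$, and they express $\vp,\psi$ in terms of $\tau,\xi,\e$ and their first derivatives; typical early consequences are $\tau=\tau(t)$ and that $\e$ has no dependence on $x,u,v$. The coefficients of first-jet monomials then control the $y$-dependence: each free coefficient turns out to be a polynomial in $y$ of degree at most two whose coefficients are functions of $t$ alone, and the $x$-dependence of $\xi$ reduces to being at most affine. Relabelling these free functions of $t$ as $a,b,c,d,e$ produces precisely the five families $X_1(a),\dots,X_5(e)$.

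The main obstacle is bookkeeping rather than any deep difficulty. Fully expanded, the two prolonged identities yield several pages of polynomial coefficient conditions, so a careful ordering of the jet monomials and systematic elimination in descending weight is essential for the reduction to close; this part is most efficiently carried out with a computer algebra system, although every individual step is routine. After the integration, it is prudent to verify by a direct prolongation that each $X_i$ in the list indeed satisfies $L_{X^{(2)}}F_i\equiv 0\pmod{F_1,F_2}$, which closes the argument and from which the commutation relations follow by a straightforward bracket computation, family by family.
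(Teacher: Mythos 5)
Your proposal is correct and follows essentially the same route as the paper: the authors also set up the Lie equation $(L_{X^{(2)}}F_i)|_{\E_2}=0$, decompose by the fiber coordinates of $\E_2\to E$ to obtain an overdetermined linear determining system on the coefficients of $X$, and state that it can be explicitly solved (with computer algebra) to yield the five families $X_1(a),\dots,X_5(e)$. Your added remarks on the order of elimination and the final verification are sensible implementation details of the same standard method.
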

It follows from the table that $\sym$ is a perfect Lie algebra: $[\sym,\sym]=\sym$. We also see that the splitting $\sym=\sym_0 \oplus \sym_1 \oplus \sym_2$, with $\sym_0=\langle X_4,X_5\rangle, \sym_1=\langle X_2,X_3\rangle, \sym_2=\langle X_1 \rangle$, gives a grading of $\sym$, i.e. $[\sym_i, \sym_j] \subset \sym_{i+j}$ ($\sym_i=0$ for $i \notin \{0,1,2\}$).

\bgroup
\def\arraystretch{1.3}
\begin{table}
	\begin{tabular}{|c||c|c|c|c|c|}
		\hline
		& $X_1(g)$ & $X_2(g)$ & $X_3(g)$ & $X_4(g)$ & $X_5(g)$  \\ \hline \hline
		$X_1(f)$	& $0$ & $0$  & $0$ & $X_1(-g \dot f )$ & $X_1(2 f g)$ \\ \hline
		$X_2(f)$	& $*$ & $0$ & $X_1(fg)$ & $X_2( \frac{f \dot g}{2}-g \dot f)$ & $X_2(fg)+X_3(2 f \dot g)$ \\ \hline
		$X_3(f)$	& $*$ & $*$ & $0$ & $X_3( -g \dot f-\frac{f \dot g}{2})$ & $X_3(fg)$ \\ \hline
		$X_4(f)$	& $*$ & $*$ & $*$ & $X_4(f \dot g-g \dot f)$ & $X_5(f \dot g)$\\\hline
		$X_5(f)$    & $*$ & $*$ & $*$ & $*$ & $0$ \\\hline
	\end{tabular} 	
	\caption{The structure of the symmetry Lie algebra 	$\sym$.}
	\label{CommutationRelations}
\end{table}
\egroup

Consider the action of the Lie pseudogroup $\mathcal G_\text{top}$ on $E$ defined by
\begin{align*}
t &\mapsto D(t) ,\\
x &\mapsto E(t)^2 x+E(t) E'(t) y^2+C(t) y+A(t),\\
y &\mapsto \sqrt{D'(t)} E(t) y+B(t),\\
u &\mapsto \frac{E(t)}{\sqrt{D'(t)}} u -\frac{y}{E(t)^2}\frac{d}{dt}\frac{E(t)^3}{\sqrt{D'(t)}}+\frac{B'(t)}{D'(t)}-\frac{2C(t)}{E(t) \sqrt{D'(t)}} ,\\
v &\mapsto \frac{E(t)^2}{D'(t)} v + \frac{C(t)+2E(t) E'(t) y}{D'(t)} u + \frac{E(t) E''(t)-3 E'(t)^2}{D'(t)} y^2\\&+ \frac{E(t)^4}{D'(t)}\frac{d}{dt}\frac{C(t)}{E(t)^4}y+ \frac{2E(t) E'(t)}{D'(t)}x+\frac{E(t)^2 A'(t)-C(t)^2}{D'(t) E(t)^2},
\end{align*}
where $D \in \text{Diff}_{\text{loc}}^+(\mathbb R)$ is an orientation-preserving local diffeomorphism of $\mathbb R$ and $A,B,C,E$ are smooth functions with the same domain as $D$ and $E(t) > 0$ for every $t$ in its domain.

This Lie pseudogroup is topologically connected and has $\sym$ as its Lie algebra of vector fields. However $\mathcal G_\text{top}$ is not algebraic. Since the global Lie-Tresse theorem holds for algebraic Lie pseudogroups, we consider the Zariski closure of $\mathcal G_\text{top}$, denoted by $\mathcal G_Z$. The subgroup $\mathcal G_\text{top}$ is normal in $\mathcal G_Z$ and  $\mathcal G_Z / \mathcal G_\text{top} = \mathbb Z_2 \times \mathbb Z_2$ is generated by reflections $(t,x,y) \mapsto (-t,-x,-y)$ and $(y,u)\mapsto (-y,-u)$. Thus it can be argued, also from a geometric viewpoint, that it is more natural to consider $\mathcal G_Z$ instead of $\mathcal G_\text{top}$.
Since $\mathcal G_Z$ is the Lie pseudogroup we will be interested in, we simplify the notation to $\mathcal G$.

\subsection{Dimension of generic orbits}\label{S2.3}

Denote by $\mathcal O_k$ a generic orbit of the $\mathcal G$-action on $\MS_k$.
Its topologically-connected component is an orbit of the prolongation $\sym^{(k)}$ of $\sym$, and so we consider the action of the latter.


The Lie algebra $\sym$ acts transitively on $J^0\pi$ and $\sym^{(1)}$ acts locally transitively
on $J^1\pi$ (the hyperplane given by $u_x=0$ is invariant). A generic orbit of $\sym^{(2)}$ on both $\MS_2$ and $J^2\pi$ has dimension $18$.
The next theorem describes the orbit dimensions for every $k$.

 \begin{proposition}
A generic orbit $\mathcal O_k$ of the $\g^{(k)}$-action on $\MS_k$ satisfies:
 \[
\dim \mathcal O_0=5,\qquad \dim \mathcal O_1=11,\qquad  \dim \mathcal O_k = 5k+8 , \quad k \geq 2.
 \]
 \end{proposition}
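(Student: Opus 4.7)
The plan is to identify $\dim\mathcal O_k$ with the rank of the evaluation map $\op{ev}_\theta^{(k)}\colon \sym\to T_\theta\MS_k$, $X\mapsto X^{(k)}|_\theta$, at a generic $\theta\in\MS_k$. Since every generator in Theorem \ref{th:symmetries} is linear in the five functions $a,b,c,d,e$ and their $t$-derivatives, the image of $\op{ev}_\theta^{(k)}$ depends only on finitely many Taylor coefficients of these functions at $t_0=t(\theta)$, so the rank is finite and equal to $\dim\mathcal O_k$.

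The cases $k=0,1$ are the transitivity statements recorded immediately before the proposition. For $k=2$ I would argue by direct computation: write out $X_i^{(2)}$, specialize to a generic jet $\theta\in\MS_2$ (only Taylor data up to order three in $a,b,c$ and up to order four in $d,e$ enters), and verify that the rank of the resulting linear map is $18$.

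For $k\ge 3$ I plan to use induction, aiming to show that the projection $\pi_{k,k-1}\colon \mathcal O_k\to\mathcal O_{k-1}$ has generic fibre of dimension exactly $5$, which immediately gives $\dim\mathcal O_k=\dim\mathcal O_{k-1}+5$. Going from $(k-1)$-prolongations to $k$-prolongations, each generator acquires dependence on exactly one new Taylor coefficient at $t_0$, namely $a^{(k+1)},b^{(k+1)},c^{(k+1)},d^{(k+2)},e^{(k+2)}$, since prolongation by one order involves one extra application of $D_t$ to each coefficient. This bounds the fibre dimension from above by $5$. For the reverse bound I would verify that the five corresponding top-symbol tangent vectors in $\ker d\pi_{k,k-1}$ are linearly independent at a generic $\theta$; concretely this reduces to inspecting the leading contribution of each $X_i^{(k)}$ to $\partial_{u_\sigma},\partial_{v_\sigma}$ with $|\sigma|=k$ and checking that the five resulting configurations, distinguished by their $y$-polynomial weights and by the choice of jet slot, have no nontrivial linear relation.

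The main obstacle is precisely this linear-independence check for every $k\ge 3$. It is a symbol-level calculation that requires careful bookkeeping of total $t$-derivatives of the coefficients of each $X_i$, but the pattern is already visible at $k=2$, and the structurally distinct leading behaviour of the five generators — no two of them place their highest derivative into the same jet slot with the same $y$-weight — makes it plausible that the 5-dimensional fibre persists for all larger $k$.
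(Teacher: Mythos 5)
Your overall strategy (a finite parameter count for the upper bound, plus an induction producing five new independent directions for the lower bound) is the same as the paper's, but your upper bound contains a genuine gap. You infer that, because each generator acquires exactly one new Taylor coefficient in passing from the $(k-1)$-st to the $k$-th prolongation, the fibre of $\pi_{k,k-1}\colon\mathcal O_k\to\mathcal O_{k-1}$ has dimension at most $5$. This inference is not valid: the tangent space to that fibre is the image under the level-$k$ evaluation map of the \emph{entire} preimage of $\ker\op{ev}^{(k-1)}_{\theta_{k-1}}$, i.e.\ of the five new parameters together with the stabilizer of $\theta_{k-1}$, and the latter may act nontrivially at level $k$. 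Concretely, exactly five new Taylor coefficients also appear in passing from $k=1$ to $k=2$, yet $\dim\mathcal O_2-\dim\mathcal O_1=18-11=7$; so ``five new parameters'' does not by itself bound the increment by five. To close this you must either prove that the kernel of the evaluation map on the truncated Taylor data does not shrink for $k\ge2$ (which is essentially the statement being proved), or do what the paper does: use transitivity on $E$ to place $\theta_k$ over the point $(t,x,y,u,v)=(0,0,0,0,0)$. There the $y$-, $x$-, $u$-, $v$-weighted top-order terms in the prolongations of $X_3,X_4,X_5$ vanish, so $T_{\theta_k}\mathcal O_{\theta_k}$ is spanned by exactly $5k+8$ vectors $Y_i^k(m)=\frac1{m!}X_i^{(k)}(t^m)$ (with $m\le k+1$ for $i=1,2,4$ and $m\le k$ for $i=3,5$), and the bound $\dim\mathcal O_k\le 5k+8$ is immediate, with no induction. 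Note also that at a truly generic point your list of new coefficients accumulates to $5k+12$ parameters at level $k$, so the direct count there is not sharp.

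For the lower bound you correctly identify the independence of the five new symbol directions as the crux, but you leave it at ``plausible.'' The paper settles it by fixing $\theta_{k+1}$ with $u_x=u_{xx}=1$ and all other fibre coordinates zero and computing $Y_1^{k+1}(k+2)$, $Y_2^{k+1}(k+2)$, $Y_3^{k+1}(k+1)$, $Y_4^{k+1}(k+2)$, $Y_5^{k+1}(k+1)$ explicitly: they are constant-coefficient combinations of $\partial_{u_\sigma},\partial_{v_\sigma}$ with $|\sigma|=k+1$, tangent to the fibre over $\theta_k$, and visibly independent. Be aware that independence here comes from a triangular, not diagonal, pattern --- e.g.\ $Y_3^{k+1}(k+1)$ and $Y_2^{k+1}(k+2)$ both involve $\partial_{u_{t^{k+1}}}$ --- so your heuristic that no two generators place their leading term in the same jet slot is not literally what happens. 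This explicit symbol computation, together with a direct check of the base case $k=2$, is the actual content of the proof; as written, your argument establishes neither bound.
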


 \begin{proof}
Consider the point  $(t,x,y,u,v)=(0,0,0,0,0)\in E$, and denote its fiber under the projection $\MS_k \to E$ by $S_k$.
Since $\sym$ acts transitively on $E$, every orbit of $\sym^{(k)}$ in $\MS_k$ intersects $S_k$ at some
point $\theta_k \in S_k$. Denote by $\mathcal{O}_{\theta_k}$ the $\g^{(k)}$-orbit through $\theta_k \in S_k$.
We have $T_{\theta_k}\mathcal{O}_{\theta_k}=\op{span}\{X_i^{(k)}(f_i)_{\theta_k}:f_i\in C^\infty(\R),i=1,...,5\}$. Here and below $X_i^{(k)}(f)_{\theta_k}$ denotes the prolongation of the vector field $X_i(f)$ to $J^k \pi$, evaluated at the point $\theta_k$.
	
The $k$-th prolongation of a vector field $X$ has the coordinate form
 \begin{equation}\label{eq:prolong}
X^{(k)} = \sum_{i=1}^3 \alpha^i \D_i^{(k+1)}+\sum_{|\sigma|\leq k} \left( \D_\sigma (\phi_u) \partial_{u_\sigma}+\D_\sigma (\phi_v) \partial_{v_\sigma}\right).
 \end{equation}
Here $\sigma$ is a multi-index, $\D_\sigma$ is the iterated total derivative, $\D_i^{(k+1)}$ is the truncated total derivative as a derivation on $k$-jets\footnote{The truncated total derivative is given by $\D_i^{(k+1)} = \partial_{x^i} + \sum_{|\sigma|\leq k} (u_{\sigma i} \partial_{u_\sigma} + v_{\sigma i} \partial_{v_\sigma})$.}, $\alpha^i = dx^i(X)$ with the notation $(x^1,x^2,x^3)=(t,x,y), u_\sigma=u_{x^\sigma},v_\sigma=v_{x^\sigma}$, and the functions $\phi_u=\omega_u(X)$, $\phi_v=\omega_v(X)$ are components of the generating section $\phi=(\phi_u,\phi_v)$ for $X$, where
 \begin{align*}
\omega_u=du-u_t dt-u_x dx-u_y dy, \qquad \omega_v=dv-v_t dt-v_x dx-v_y dy .
 \end{align*}
Below we denote by $Y_i^k(m) =\frac{1}{m!} X_i^{(k)}(t^m)$ for $i=1,...,5$, the vector fields on $\E_k$.
	
Consider first the vector field $X_1(a)$. Its generating section is
 \[\phi_1=(-u_x a(t),\; \dot a(t) - v_x a(t)).\]
This and (\ref{eq:prolong}) implies that the vector $X_1^{(k)}(a)_{\theta_k}$ depends only on $a(0),...,a^{(k+1)}(0)$.
Therefore $\op{span}\{X_1^{(k)}(a)_{\theta_k}:a\in C^\infty(\R)\}=\op{span}\{Y_1^k(m)_{\theta_k}:m=0,\dots,k+1\}$.
	
Repeating this argument for $X_2(b),...,X_5(e)$ we conclude that the subspace
$T_{\theta_k}\mathcal{O}_{\theta_k}\subset T_{\theta_k} \MS_k$ is spanned by
 \begin{equation}
V_k=\{Y_1^k(m),Y_2^k(m),Y_3^k(n),Y_4^k(m),Y_5^k(n) : m\leq k+1, n\leq k\}
 \end{equation}
evaluated at $\theta_k$. This gives the upper bound $5k+8 = |V_k|$ for $\dim \mathcal{O}_k$. (For $k=0,1$ the orbit dimension is bounded even more by $\dim \MS_0=5$ and $\dim \MS_1=11$.)
	
We use induction to show that there exist orbits of dimension $5k+8$ for $k \geq 2$. Due to lower semicontinuity of matrix rank, an orbit in general position will then also have the same dimension.
We choose $\theta_k$ to be given by $u_x=1,u_{xx}=1$ and all other jet-variables set to $0$.
For the induction step assume that all vectors in the set $V_k$ are independent, and hence
$\dim\mathcal{O}_{\theta_k}=5k+8$. For $k=2$ this is easily verified in Maple.
The five vectors
 \begin{align*}
Y_1^{k+1}(k+2)_{\theta_{k+1}} = \partial_{v_{t^{k+1}}}, &\qquad
Y_2^{k+1}(k+2)_{\theta_{k+1}} = \partial_{u_{t^{k+1}}}, \\
Y_3^{k+1}(k+1)_{\theta_{k+1}} = \partial_{v_{t^k y}}-2 \partial_{u_{t^{k+1}}}, &\qquad
Y_4^{k+1}(k+2)_{\theta_{k+1}} = \frac12 \partial_{u_{t^k y}},\\
Y_5^{k+1}(k+1)_{\theta_{k+1}} = -3 \partial_{u_{t^{k} y}} &+2 \partial_{v_{t^{k} x}} +2 \partial_{v_{t^{k-1} y^2}}
 \end{align*}
are independent and tangent to the fiber of $S_{k+1}$ over $\theta_k \in S_k$.
Therefore they are independent with the prolonged vector fields from $V_k$ at $\theta_{k+1}$.
Thus $\dim \mathcal O_{\theta_{k+1}}=5k+8+5=5(k+1)+8$, completing the induction step and the proof.
\end{proof}

\subsection{Shape-preserving transformations}\label{S2.4}

The ansatz (\ref{shape}) for \EW structures on $M$ is not invariant under arbitrary local diffeomorphisms of $M$,
and we want to determine the pseudogroup preserving this shape of $(g,\omega)$. Its Lie algebra
sheaf is given as follows.


 \begin{theorem}
The Lie algebra $\mathfrak h$ of vector fields preserving shape (\ref{shape}) of $(g, \omega)$ has the following generators,
involving five arbitrary functions $a=a(t),...,e=e(t)$:
 \begin{align}\label{eq:shapepreserving}
a \partial_x,\quad
b \partial_y, \quad
y c \partial_x ,\quad
d  \partial_t +\frac{1}{2} \dot d  y \partial_y, \quad
(y^2 \dot e+2 x e) \partial_x+y e \partial_y.
 \end{align}
 \end{theorem}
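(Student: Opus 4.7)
The plan is to write $X = \alpha\partial_t + \beta\partial_x + \gamma\partial_y$ with $\alpha,\beta,\gamma\in C^\infty(M)$ and impose shape preservation: the Lie derivative of $(g,\omega)$ along $X$ must move the pair only within the shape (\ref{shape}). Since Weyl structures carry the conformal gauge freedom $g\mapsto f^2 g$, $\omega\mapsto\omega + 2\,df/f$, the precise infinitesimal conditions are
\[
L_X g = (\partial_u g)\,\delta u + (\partial_v g)\,\delta v + \lambda g,\qquad L_X\omega = \delta A\,dt + \delta B\,dy + d\lambda,
\]
where $A = uu_x + 2u_y + 4v_x$, $B = -u_x$, and $\delta A, \delta B$ are the linearisations of $A, B$ in $(u,v)$. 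These must hold identically in the jet variables of $u,v$, since a shape-preserving diffeomorphism of $M$ is required to act on every Einstein-Weyl structure written in the given normal form.

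I first compute $L_X g$ component by component. The four components for which $\partial_u g_{ij} = \partial_v g_{ij} = 0$, namely $ij = xx, xy, yy, tx$, give genuine constraints on $X$; after equating coefficients in the free variables $u,v$ they yield
\[
\alpha_x = 0,\quad \alpha_y = 0,\quad \gamma_x = 0,\quad \lambda = 2\gamma_y,\quad \beta_x = 2\gamma_y - \dot\alpha.
\]
Consequently
\[
\alpha = \alpha(t),\quad \gamma = \gamma(t,y),\quad \beta = (2\gamma_y - \dot\alpha)\,x + \tilde\beta(t,y),
\]
while $\delta u$ and $\delta v$ are read off from the two remaining components $(L_X g)_{ty}$ and $(L_X g)_{tt}$.

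Next I substitute these $\delta u, \delta v$ and their derivatives into $L_X\omega$ and match with $\delta\omega + d\lambda$. The $dx$-component is automatic since $\lambda_x = 2\gamma_{yx} = 0$. In the $dy$-component, after using $\beta_x = 2\gamma_y - \dot\alpha$, all jet-monomials in $u$ cancel against each other and the residual identity reduces to $\beta_{xy} = \gamma_{yy}$; but the form of $\beta_x$ gives $\beta_{xy} = 2\gamma_{yy}$, forcing $\gamma_{yy} = 0$. Hence $\gamma(t,y) = \gamma_0(t) + \gamma_1(t)y$. The heaviest part is the $dt$-component: after substituting $\delta u,(\delta u)_x,(\delta u)_y,\delta v,(\delta v)_x$ and applying the constraints already derived, the jet-monomials in $(u,v)$ cancel in pairs, and what remains is the single relation $\tilde\beta_{yy}(t,y) = 2\dot\gamma_1(t) - \ddot\alpha(t)$, which forces $\tilde\beta$ to be quadratic in $y$ with prescribed leading coefficient.

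Parametrising the resulting five-dimensional solution space by arbitrary functions of $t$ via $\alpha = d$, $\gamma_0 = b$, $\gamma_1 = e + \tfrac12\dot d$, $\tilde\beta_0 = a$, $\tilde\beta_1 = c$ (so $\tilde\beta_2 = \dot e$ automatically), the general shape-preserving $X$ decomposes as the sum of the five generators in (\ref{eq:shapepreserving}). The main technical obstacle is the $dt$-component of $L_X\omega$: it carries many second-order jet terms in $(u,v)$, and the metric constraints of the first step must be invoked carefully to see that these terms collapse to the single condition on $\tilde\beta$. A conceptually cleaner alternative is to note that $\hat X = X + \delta u\,\partial_u + \delta v\,\partial_v$ is by construction a point symmetry of the modified Manakov-Santini system (\ref{MS}); the theorem then follows at once by projecting the generators of $\sym$ from Theorem~\ref{th:symmetries} to the base $M$ and verifying that this projection is injective.
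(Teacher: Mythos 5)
Your main argument coincides with the paper's proof: the paper imposes exactly the same infinitesimal conditions, $L_Xg=\chi g+2\mu\,dt\,dy-(2u\mu+4\nu)\,dt^2$ and $L_X\omega=(u_x\mu+u\mu_x+2\mu_y+4\nu_x)\,dt-\mu_x\,dy+d\chi$, and states that the resulting overdetermined system on $(\alpha,\beta,\gamma)$ is solved exactly by the five families, while you carry out the elimination explicitly --- and your intermediate constraints ($\alpha_x=\alpha_y=\gamma_x=0$, $\lambda=2\gamma_y$, $\beta_x=2\gamma_y-\dot\alpha$, $\gamma_{yy}=0$, $\tilde\beta_{yy}=2\dot\gamma_1-\ddot\alpha$) and final parametrisation are correct. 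One caution about your closing aside: the identification of $\mathfrak h$ with the base projection of $\sym$ does not follow ``at once,'' since shape-preservation of $X$ only gives $\hat X\in\sym$ and hence the inclusion $\mathfrak h\subseteq\pi_*(\sym)$, while the reverse inclusion requires verifying that the fibre components of each generator in Theorem~\ref{th:symmetries} agree with the $\mu,\nu$ induced by its base part --- precisely the lift computation the paper performs separately to obtain the Corollary in Section~\ref{S2.4}.
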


 \begin{proof}
Let $X=\alpha(t,x,y) \partial_t+\beta(t,x,y) \partial_x+\gamma(t,x,y) \partial_y$ be a vector field on $M$ preserving
the shape of $(g, \omega)$, and $\varphi_\tau$ its flow. The pullback of $g$ through $\varphi_{\tau}$ has the same shape, up to a conformal factor $f^\tau$, so that
 \[ \varphi_{\tau}^* g = f^\tau (4 dt dx+2 u^\tau dt dy- ((u^\tau)^2+4v^\tau) dt^2-dy^2),\]
where $f^\tau, u^\tau, v^\tau$ are $\tau$-parameteric functions of $t,x,y$ with $f^0=1, u^0=u, v^0=v$.
Denote $\chi=\frac{d}{d\tau}\big|_{\tau=0} f^\tau, \mu=\frac{d}{d\tau}\big|_{\tau=0} u^\tau, \nu=\frac{d}{d\tau}\big|_{\tau=0} v^\tau$. Then the Lie derivative is
 \[ L_X g= \chi g + 2\mu dt dy - (2u\mu+4\nu) dt^2. \]
Similarly, from $\varphi_\tau^*\omega = \omega^\tau+d \log f^\tau$, we obtain the formula
 \[ L_X \omega = (u_x\mu+u\mu_x +2\mu_y +4\nu_x) dt -\mu_x dy +d\chi. \]
	
These restrictions yield an overdetermined system of differential equations on $\alpha$, $\beta$ and $\gamma$ whose solutions give exactly the vector fields (\ref{eq:shapepreserving}).
 \end{proof}

 \com{
 The following Lie pseudogroup action on $M$ has $\mathfrak h$ as its Lie algebra of vector fields on $M$:
 \begin{align*}
 t &\mapsto D(t) \\
 x &\mapsto E(t)^2 x+E(t) E'(t) y^2+C(t) y+A(t)\\
 y &\mapsto |D'(t)|^{1/2} E(t) y+B(t)
 \end{align*}
 where $D \in \text{Diff}_{\text{loc}}(\mathbb R)$ and $A,B,C,E$ are smooth functions with the same domain as $D$ and $E(t) \neq 0$ for every $t$ in its domain.
 }

The Lie algebra $\mathfrak{h}$ of vector field on $M$ can be naturally lifted to
the Lie algebra $\hat{\mathfrak{h}}$ on the total space $E$. Let $X\in\mathfrak{h}$.
Its lift $\hat X=X+A\partial_u+B\partial_v\in\hat{\mathfrak{h}}$ is computed  as follows.
The pullback of $g$ to $E$ is a horizontal symmetric two-form $\hat{g}$.
Then the condition $L_{\hat X}\hat{g}= \chi \hat{g}$ uniquely determines the coefficients $A,B$.

Applying this to the general vector field
$X=2d \partial_t+(a+yc+2xe+y^2 \dot e) \partial_x+(b+y \dot d+ye) \partial_y\in\mathfrak{h}$
we get $\chi=2(e(t)+\dot d(t))$.
Moreover for the pull-back $\hat\omega$ of $\omega$ and the prolongation of the vector field ${\hat X}$
we get $L_{{\hat X}^{(1)}}\hat\omega = d \chi$.
Comparing the resulting $A$ and $B$ with the vector fields in Theorem \ref{th:symmetries}, we conclude:

 \begin{corollary}
The lift $\hat{\mathfrak{h}}$ of the Lie algebra $\mathfrak h$ of shape-preserving vector fields is exactly the Lie algebra $\sym$ of point symmetries of $\MS$.
 \end{corollary}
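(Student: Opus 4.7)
The plan is to prove the corollary by verifying two inclusions. The inclusion $\hat{\mathfrak h} \subseteq \sym$ is essentially tautological: the system $\MS$ was derived by imposing the diffeomorphism-invariant Einstein-Weyl equation on the specific ansatz (\ref{shape}), so any vector field on $M$ preserving the shape of $(g,\omega)$ sends Einstein-Weyl pairs of that shape to pairs of the same shape, and hence sends solutions of $\MS$ to solutions. The canonically lifted field $\hat X = X + A\,\partial_u + B\,\partial_v$, with $A, B$ determined by $L_{\hat X}\hat g = \chi\hat g$, is therefore a point symmetry of $\MS$.

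For the reverse inclusion the plan is to compute the lift explicitly and match it against the generators of Theorem \ref{th:symmetries}. Take the general element
$$X = 2d\,\partial_t + (a+yc+2xe+y^2\dot e)\,\partial_x + (b+y\dot d+ye)\,\partial_y \in \mathfrak h,$$
and write $\hat X = X + A\,\partial_u + B\,\partial_v$. Substituting into $L_{\hat X}\hat g = \chi\hat g$ and decomposing the resulting horizontal symmetric two-form in the basis $\{dt^2,\, dt\,dx,\, dt\,dy,\, dy^2\}$, one extracts: the $dy^2$-coefficient gives $\chi = 2(e + \dot d)$; the $dt\,dy$-coefficient is linear in $A$ and solves for it; the $dt^2$-coefficient then determines $B$ once $A$ is known; and the $dt\,dx$-coefficient is automatic. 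The final step is to compare the resulting $(A,B)$ with the $\partial_u$ and $\partial_v$ components of $X_1(a)+X_2(b)+X_3(c)+X_4(d)+X_5(e)$ read off from Theorem \ref{th:symmetries}: a direct inspection shows the expressions agree term-by-term, giving $\sym \subseteq \hat{\mathfrak h}$.

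As a sanity check one verifies $L_{\hat X^{(1)}}\hat\omega = d\chi$, which ensures that the full Weyl structure (not just the conformal class $[g]$) is preserved; the computation above makes this automatic. The main obstacle is purely organizational: both $\mathfrak h$ and $\sym$ are parametrized by the same five arbitrary functions $a,b,c,d,e$ of $t$, so their function-parameter spaces match in size, and the corollary reduces to confirming that the algebraic solve for $(A,B)$ reproduces exactly the five prolongations listed in Theorem \ref{th:symmetries}. No genuine obstruction arises; the content is the coincidence of the two explicit lists of generators.
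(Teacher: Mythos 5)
Your proposal is correct and follows essentially the same route as the paper: the paper likewise determines the lift $\hat X=X+A\partial_u+B\partial_v$ of the general element of $\mathfrak h$ from the condition $L_{\hat X}\hat g=\chi\hat g$ (finding $\chi=2(e+\dot d)$), verifies $L_{\hat X^{(1)}}\hat\omega=d\chi$, and identifies the resulting $(A,B)$ term-by-term with the generators $X_1(a),\dots,X_5(e)$ of Theorem \ref{th:symmetries}. Your preliminary ``tautological'' inclusion $\hat{\mathfrak h}\subseteq\sym$ is a harmless extra observation; the explicit match already yields the equality in one stroke.
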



Let us reformulate our lift of the algebra $\mathfrak{h}$ using integrability of system \eqref{MS}.
Its Lax pair is given by a rank 2 distribution
$\tilde{\Pi}^2=\op{span}\{\partial_y-\lambda\partial_x+n\partial_\lambda,
\partial_t-(\lambda^2-u\lambda-v)\partial_x+m\partial_\lambda\}$
on $\mathbb{P}^1$-bundle $\tilde{M}$ over $M$, which is Frobenius-integrable in virtue of \eqref{MS}
(the form of $m,n$ is not essential here, see \cite{DFK}).
The fiber can be identified with the projectivized null-cone of $g$.
The coordinate $\lambda$ along it is called the spectral parameter.
The action of $\mathfrak{h}$ on $M$ induces the action on $\tilde{M}$
and hence on $\tilde{\Pi}^2$.
Since the plane $\tilde{\Pi}^2_{(t,x,y,\lambda)}$ is projected to the plane
$\Pi^2=\op{Ann}(dx+\lambda dy+(\lambda^2-u\lambda-v)dt)$, this in turn gives the action on $u,v$, i.e. the required lift.

\section{Differential invariants of $\E$}\label{S3}

In this section we determine generators of the field of scalar rational differential invariants of the equation $\E$ with respect to its
symmetry pseudogroup $\mathcal{G}$. We also compute the Poincar\'e function of the $\mathcal{G}$-action, counting moduli of the problem,
and discuss solution of the equivalence problem for \EW structures written in form \eqref{shape}.

\subsection{Hilbert polynomial and Poincar\'e function}\label{S3.1}

The number $s_k$ of independent differential invariants of order $k$ is equal to the codimension of a generic orbit $\mathcal O_k\subset\MS_k$.
Since, as in Section \ref{S1.1}, rational differential invariants of $\mathcal G$ coincide with those of $\sym^{(k)}$,
we can compute $s_k$ using the results from Section \ref{S2.3}:
 \[ s_k = \dim \MS_k - \dim \mathcal O_k  =2k^2-k-3, \qquad k \geq 2\]
Due to local transitivity $s_0=s_1=0$.

The difference $h_k=s_k-s_{k-1}$ counts the number of invariants of ``pure'' order $k$. It is given as folows:
$h_0=h_1=0$, $h_2=3$ and $h_k=4k-3$ for $k>2$. The Hilbert polynomial is the stable value of $h_k$: $H(k)=4k-3$.

These numbers can be compactified into the Poincar\'e function:
 $$
P(z)=\sum_{k=0}^\infty h_kz^k=\frac{(3+3z-2z^2)z^2}{(1-z)^2}.
 $$

\subsection{Invariant derivations and differential invariants}\label{S3.2}

All objects we treat in this section will be written in terms of ambient coordinates on $J^k\pi\supset\E_k$.

From the previous section, we know that there exist three independent rational differential invariants of order two.
The second-order invariants are generated by
 \begin{align*}
I_1 = \frac {u_{xy}+v_{xx}}{u_x^{2}},\quad
I_2 &= \frac {u_x^{2}u_{xy}+u_x u_{xx}v_x+u_{xx}u_{yy}-u_{xy}^{2}}{u_x^{4}},\\
I_3 &= \frac {u_x^{2}v_{xx}-u_x u_{xx}v_x+u_{xx}v_{xy}-u_{xy}v_{xx}}{u_x^{4}}.
 \end{align*}
In order to generate all differential invariants, we also need invariant derivations. These are derivations on the algebra of differential invariants commuting with $\mathcal G$. It is easily checked that
\begin{align*}
\nabla_1 &= \frac{u_x}{u_{xx}} D_x, \qquad
\nabla_2 = \frac{1}{u_x} \left(\frac{u_{xy}}{u_{xx}} D_x-D_y   \right),\\
\nabla_3 &= \frac{1}{u_x^3} \Big(u_{xx} D_t+\left((v u_{x})_x+u_{yy}\right) D_x+ \left( u u_{x}-2 u_{y}    \right)_x D_y    \Big)
\end{align*}
are three independent invariant derivations. Their commutation relations are given by
 \begin{align*}
[\nabla_1,\nabla_2] = -\nabla_2, \qquad
&[\nabla_1,\nabla_3]=-K_3 \nabla_1+ (K_1-2K_2) \nabla_2+ K_1 \nabla_3 , \\
&[\nabla_2,\nabla_3]= K_4 \nabla_1+K_3 \nabla_2+K_2 \nabla_3,
 \end{align*}
where
 \begin{align*}
K_1&= \frac{u_x u_{xxx}}{u_{xx}^2}-3 =\nabla_1(\log(u_{xx}))-3, \\
K_2 &= \frac{u_{xy} u_{xxx}-u_{xx} u_{xxy}}{u_x u_{xx}^2}= \nabla_2(\log(u_{xx})),\\
K_3 
&=K_2 \left(1-2  \frac{u_{xy}}{u_{x}^2}\right) -2 \frac{u_{xx}}{u_x^3}  \nabla_2(u_y)+\frac{2}{u_{x}^2} \nabla_2(u_{xy}),\\
K_4 &= \frac{1}{u_x^4} \left(u_{xx} \nabla_2(2u_{yy}-u_x u_y) -\nabla_2(u_{xy}/u_{xx}) u_{xx} (2u_{xy}-u_x^2)- \nabla_2(u_{xy}^2)     \right)
 \end{align*}
are independent differential invariants of the third order.

\com{Written in terms of $I_1,I_2,I_3$ the coefficients are
\begin{align*}
K_1 &= \frac{1}{\Delta} \big(I_1 I_2 ( \nabla_1(I_1)-2 I_1)+(I_2+I_3) ( \nabla_2(I_1)+ \nabla_1(I_3)- \nabla_1(I_1)+ \nabla_1(I_2)-I_1) \\&-I_1 ( \nabla_3(I_1)+ \nabla_2(I_3)+ \nabla_2(I_2)-I_1)+I_1^2 ( \nabla_1(I_2)-I_1)\big)+1\\
K_2 &=\frac{1}{\Delta}\big(I_2 (I_2+I_3)  \nabla_1(I_1)-I_1 I_2 ( \nabla_2(I_1)+ \nabla_1(I_3))\\&+(I_1-I_2-I_3) (2 I_1 I_2-I_1+ \nabla_3(I_1)+ \nabla_2(I_2)+ \nabla_2(I_3))\\&-I_1 (I_1-I_3) ( \nabla_1(I_2)-I_1)\big) -1\\
K_3 &=\frac{1}{\Delta} \big(2 I_2 (I_2+I_3) ( \nabla_2(I_1)- \nabla_1(I_1)+ \nabla_1(I_3))\\&+2 I_1 I_2 ( \nabla_1(I_1) I_2- \nabla_3(I_1)- \nabla_2(I_3)- \nabla_2(I_2)+ \nabla_1(I_2)-I_1 (I_1+2 I_2))\\&+2 (I_1-I_2-I_3) (I_1 I_2+ \nabla_1(I_2) I_3)\big)+ K_2+2I_1+2I_2
\end{align*}
\begin{align*}
K_4 &= \frac{1}{\Delta} \big((-I_1+I_2+I_3) ((1-2 I_2) ( \nabla_3(I_1)+ \nabla_2(I_3)+ \nabla_2(I_2))\\&-I_2 ( \nabla_2(I_1)+ \nabla_1(I_3))+(-I_1+I_3)  \nabla_1(I_2)+I_2^2  \nabla_1(I_1)\\&-I_1 (4 I_2^2-I_1-2 I_2+3 I_3)-I_1)+I_2^2 (I_2+I_3)  \nabla_1(I_1)\\&+I_1 I_2 ( \nabla_3(I_1)+ \nabla_2(I_3)+ \nabla_2(I_2)-2 I_2 ( \nabla_2(I_1)+ \nabla_1(I_3))\\&+(2 (-I_1+I_3))  \nabla_1(I_2)-I_1 (2 I_3-2 I_2+1))\big) +1+2 \nabla_2(I_2)+2I_3-I_2\\
\end{align*}
where
\[\Delta = I_1 I_2 (I_1-1) +I_2(I_2+I_3) + I_3(-I_1+I_2+I_3). \]}

The nine third-order differential invariants $\nabla_j(I_i)$ are independent, and together with $I_1,I_2,I_3$ they generate all differential invariants
of order three. In particular, $K_1,\dots,K_4$ can be expressed through them.


Moreover, $I_1,I_2,I_3$ and $\nabla_1,\nabla_2,\nabla_3$ generate all rational scalar differential invariants of the $\mathcal{G}$-action on $\E$.

\subsection{EW structure written in invariant coframe}\label{S3.3}

The invariant derivations $\nabla_1,\nabla_2,\nabla_3$ constitute a horizontal frame on an open subset in $\MS_2$. Let $\alpha^1,\alpha^2,\alpha^3$ be the dual horizontal coframe. The 1-forms $\alpha^i$ are defined at all points where $u_{xx}\neq 0$. Since $\alpha^1 \wedge \alpha^2 \wedge \alpha^3= -u_x^3 dt \wedge dx \wedge dy$, they determine a horizontal coframe outside the singular set $\Sigma_2=\{u_x=0,u_{xx}=0\} \subset \MS_2$.

In $\MS_2 \setminus \Sigma_2$ we can rewrite $g$ and $\omega$ in terms of the coframe $\alpha_1,\alpha_2,\alpha_3$. Then $g= g_{ij} \alpha^i \alpha^j$ and $\omega=\omega_i \alpha^i$, where $g_{ij}=g(\nabla_i,\nabla_j)$ and $\omega_i=\omega(\nabla_i)$. After rescaling the metric by a factor of $u_x^2$, we get the following expression.
\begin{align*}
g'&=  4 \alpha^1 \alpha^3 - \alpha^2 \alpha^2 +2\alpha^2 \alpha^3 +(4I_2-1) \alpha^3 \alpha^3, \\ \omega'&=2 \alpha^1+\alpha^2+(4I_2-1)\alpha^3.
\end{align*}

Thus, given any \EW structure whose 2-jet is in the complement of $\Sigma_2$ we may rewrite it in the form $(g',\omega')$, and we see that this expression only depends on $\alpha^1,\alpha^2,\alpha^3$ and $I_2$.  A consequence of these computations is the following theorem.
 \begin{theorem}
The field of rational $\mathfrak g$-differential invariants on $\MS$ is generated by the differential invariant $I_2$ together with the invariant derivations $\nabla_1, \nabla_2, \nabla_3$.
 \end{theorem}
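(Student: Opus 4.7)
The strategy is to show that the differential subfield $\mathbb{K} \subset \mathbb{F}$ generated by $I_2$ under the three invariant derivations $\nabla_1, \nabla_2, \nabla_3$ equals the full field $\mathbb{F}$ of rational $\mathfrak{g}$-differential invariants on $\MS$. The geometric basis is the observation immediately preceding the theorem: in the invariant coframe $(\alpha^1,\alpha^2,\alpha^3)$ dual to $(\nabla_1,\nabla_2,\nabla_3)$, the EW data takes the normalized form
\[
g' = 4\alpha^1\alpha^3 - \alpha^2\alpha^2 + 2\alpha^2\alpha^3 + (4I_2-1)\alpha^3\alpha^3, \qquad \omega' = 2\alpha^1 + \alpha^2 + (4I_2-1)\alpha^3,
\]
whose coefficients involve $I_2$ only. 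Thus at each point of $\MS_2\setminus \Sigma_2$ the entire pointwise geometry is encoded by the single scalar $I_2$, and Cartan's equivalence philosophy suggests that the higher-order invariant data of $(g,\omega)$ is captured by iterated coframe derivatives $\nabla_{i_1}\cdots\nabla_{i_k}(I_2)$.

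The main algebraic content concerns the second-order step. The field $\mathbb{K}$ automatically contains $I_2$ and the nine invariants $\nabla_i\nabla_j(I_2)$ of order $\leq 4$; what must be shown is that the remaining second-order invariants $I_1$ and $I_3$ (which are functionally independent of $I_2$ at order two) nevertheless lie in $\mathbb{K}$. For this I would compute $\nabla_1(I_2), \nabla_2(I_2), \nabla_3(I_2)$ explicitly on $J^3\pi$; each is a rational expression whose numerator is affine-linear in the third-order jet variables and in $I_1, I_3$. On a Zariski open subset the resulting Jacobian with respect to a well-chosen triple among $(I_1, I_3, u_{xxx}, u_{xxy}, v_{xxx}, \dots)$ is nondegenerate, so one can invert the system and express $I_1$ and $I_3$ as rational functions of $I_2, \nabla_1(I_2), \nabla_2(I_2), \nabla_3(I_2)$. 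The structure invariants $K_1,\dots,K_4$ appearing in the commutators $[\nabla_i,\nabla_j]$ then also lie in $\mathbb{K}$, and all third-order rational invariants $\nabla_j(I_i)$ follow.

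For the inductive step, assume all rational invariants of order $\leq k$ lie in $\mathbb{K}$. The Poincar\'e count $h_{k+1} = 4(k+1)-3$ from Subsection~\ref{S3.1} pins down the number of functionally independent new invariants at order $k+1$. Applying $\nabla_1,\nabla_2,\nabla_3$ to a maximal functionally independent subset of order-$k$ invariants from $\mathbb{K}$ produces order-$(k+1)$ invariants in $\mathbb{K}$; the commutation relations with coefficients in $\mathbb{K}$ control the resulting algebraic dependencies, and one verifies by a direct rank computation on $\MS_{k+1}$ that these new derivatives span a subspace of the correct dimension $h_{k+1}$ modulo invariants of lower order. Combined with the Lie--Tresse separation of generic orbits (Subsection~\ref{S1.1}), this forces $\mathbb{K} = \mathbb{F}$. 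The principal obstacle is the explicit algebraic inversion in the second paragraph: checking nondegeneracy of the Jacobian that allows one to solve for $I_1$ and $I_3$ in terms of derivations of $I_2$ is the decisive step and is best carried out with the help of computer algebra; once it is in hand, the inductive dimension count proceeds uniformly.
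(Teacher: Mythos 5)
Your overall architecture (coframe normalization of $(g,\omega)$, reduction to showing that $I_1,I_3$ lie in the field $\mathbb{K}$ generated by $I_2$ and the $\nabla_i$, then an inductive order count) matches the intent of the paper, but the decisive second-order step is carried out by a mechanism that cannot work. You propose to invert the three relations $\nabla_j(I_2)=\dots$ so as to express $I_1$ and $I_3$ as rational functions of $I_2,\nabla_1(I_2),\nabla_2(I_2),\nabla_3(I_2)$ alone. This contradicts a fact recorded in \S\ref{S3.4}: the twelve invariants $I_k,\nabla_j(I_i)$ are functionally independent, so in particular $I_1,I_2,I_3,\nabla_1(I_2),\nabla_2(I_2),\nabla_3(I_2)$ are six independent functions and no such expression of $I_1$ or $I_3$ can exist. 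Concretely, the quantities $\nabla_j(I_2)$ involve third-order jet coordinates that vary freely over the fiber of $\MS_3\to\MS_2$ while $I_1,I_3$ stay fixed, so your system of three equations in the many unknowns $(I_1,I_3,u_{xxx},\dots)$ is underdetermined and cannot be solved for $I_1,I_3$.

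The missing ingredient is the set of structure functions $K_1,\dots,K_4$ of the invariant frame. These are third-order invariants that do lie in $\mathbb{K}$: once three functionally independent invariants $J_1,J_2,J_3$ with independent horizontal differentials (e.g.\ $\nabla_1(I_2),\nabla_2(I_2),\nabla_3(I_2)$) are available in $\mathbb{K}$, the linear systems $[\nabla_i,\nabla_j](J_m)=\sum_k c^k_{ij}\nabla_k(J_m)$ can be solved for the coefficients $c^k_{ij}$, i.e.\ for the $K$'s. The paper then recovers $I_1$ and $I_3$ through the explicit identities
$I_1=\nabla_1(I_2)+\tfrac{K_2+K_3}{2}-I_2K_1$ and
$I_3=(\nabla_1-\nabla_2)(I_2)+\tfrac{K_2+3K_3+2K_4}{4}+I_2(K_2-K_1-1)$,
in which the $K_i$ enter in an essential way. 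You do mention the $K_i$, but only as a consequence of already having $I_1,I_3$; the logical order must be reversed. Once $I_1,I_2,I_3\in\mathbb{K}$, the statement preceding the theorem, that $I_1,I_2,I_3$ together with $\nabla_1,\nabla_2,\nabla_3$ generate all rational invariants, finishes the proof, and your inductive Poincar\'e-count closing step becomes unnecessary (though it is consistent with the general Lie--Tresse framework).
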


The reason that we are able to generate the rest of the second-order differential invariants from these is that some algebraic combinations of the higher-order invariants  will be of lower order. In particular, we have the following identities relating $I_1,I_3$ to the invariants $K_i$ from the commutation relations of the invariant derivations.
 \begin{align*}
I_1 &= \nabla_1(I_2)+\frac{K_2+K_3}{2}-I_2 K_1, \\
I_3 &= (\nabla_1-\nabla_2)(I_2)+\frac{K_2+3K_3+2K_4}{4}+I_2 (K_2-K_1-1).
 \end{align*}


\subsection{The equivalence-problem of \EW structures}\label{S3.4}
By Theorem \ref{algebraic} from the appendix and the global Lie-Tresse theorem \cite{KL2} the field of differential invariants separates generic orbits on $\tilde \MS = \MS_\infty \setminus \pi_{\infty,\ell}^{-1}(S)$ for some Zariski closed invariant subset $S \subset \MS_\ell$. Therefore, the description of the field of differential invariants is sufficient for describing the quotient equation $\tilde \MS /\mathcal{G}$.

In order to finish a description of the field of differential invariants one must find the (differential) syzygies in the differential field of scalar
invariants. Since all invariants are rational this can be done by brute force. Using $\nabla_1,\nabla_2,\nabla_3,I_1,I_2,I_3$ as the generating set
of the field of invariants, a simple computation with the \textsf{DifferentialGeometry} package of \textsc{Maple} shows that the twelve invariants
$I_k,\nabla_i(I_j)$ are functionally independent, so there are no syzygies on this level. There are five polynomial relations between
$I_i, \nabla_j(I_i), \nabla_k \nabla_j(I_i)$. Due to their length the expressions are not reproduced here, but they can be found in
the Maple file ancillary to the arXiv version of this paper.

There is another way to describe the quotient equation in our case, using the same approach as \cite{LY} and \cite{KS}. Take three independent differential
invariants $J_1,J_2,J_3$ of order $k$ (for instance $I_1,I_2,I_3$). Their horizontal differentials $\hat dJ_1,\hat dJ_2,\hat dJ_3$ determine a horizontal coframe on $\E_\ell\setminus S$ for some Zariski closed subset $S \subset \E_\ell$, $\ell >k$.
It is then possible, in the same way as in Section \ref{S3.3}, to rewrite the \EW structure in terms of this coframe:
\begin{align*}
g'&=  \sum G_{ij} \hat dJ_i \hat dJ_j, \qquad  \omega'= \sum \Omega_{i} \hat dJ_i .
\end{align*}
For one of the nonzero coefficients $G_{ij}$ we may,  after rescaling the metric, assume that $G_{ij}=1$. The quotient equation
$(\MS_\infty \setminus \pi_{\infty,\ell}^{-1}(S))/\mathcal{G}$ is obtained by adding to the Einstein-Weyl equation on $\mathbb R^3(x_1,x_2,x_3)$
the equations $\{J_i=x_i\}_{i=1}^3$.

For practical purposes the following approach solves the local equivalence problem for \EW structures of the form (\ref{shape}),
using the idea of a signature manifold \cite{COSTH}. Let $I_1,I_2,I_3$ be the basic invariants and $I_{ij} =\nabla_j(I_i)$ their derivations.
For a section $s\in \Gamma(\pi)$ let $\mathcal S_s\subset \mathbb R^{12}(z)$ be the image of the map
 $$M\ni x\mapsto \bigl(z_1=I_1(j_2(s))(x),\dots,z_4=I_{11}(j_3(s))(x),\dots,z_{12}=I_{33}(j_3(s))(x)\bigr).$$
For generic $s$ the manifold $\mathcal S_s$ is three-dimensional; it is called the signature of $s$.
If, in addition, the \EW structure $s$ is given by algebraic functions, then $\mathcal S_s$ is an algebraic manifold and
it can be defined by polynomial equations.

Let us call a section $s$ $I$-regular if $\hat{d}I_i|_s$ are defined and $(\hat{d}I_1\wedge\hat{d}I_2\wedge\hat{d}I_3)|_s\neq0$.
The invariant derivations $\nabla_j$ can be reconstructed from the twelve invariants $I_k,I_{ij}$, which in turn determine
all other differential invariants. Therefore two $I$-regular sections $s_1,s_2$ of $\pi$ are equivalent if and only if their signatures coincide.
In the algebraic case this is equivalent to equality of the corresponding polynomial ideals, and so this can be decided algorithmically.

\section{Some particular \EW spaces}\label{S4}

Symmetries can be used to find invariant solutions of differential equations. They can be also used to
obtain explicit non-symmetric solutions: use a differential constraint consisting of several differential invariants
and solve the arising overdetermined system.
In this setup the solutions come in a family, invariant under the symmetry group action, so in examples below
we normalize them using $\mathcal{G}$ to simplify the expressions.
Since use of symmetry gives a differently looking solution, but an equivalent \EW space, the generality does not suffer.

\medskip

{\bf 1.} We begin with the only relative invariant of order 1: $u_x=0$. This coupled with equation $F_1=0$ gives
$u_{yy}=0$, so $u=a(t)y+b(t)$. This can be transformed to $u=0$ by our pseudogroup $\mathcal{G}$. Then the second equation $F_2=0$
becomes the dispersionless Kadomtsev-Petviashvili (dKP), also known as the Khokhlov-Zabolotskaya equation in 1+2 dimensions \cite{KZ,KP}:
 $$
v_{tx}+v_x^2+vv_{xx}-v_{yy}=0.
 $$
This equation is integrable and has been extensively studied, see e.g. \cite{MS,DMT}.

Note that the orbit in $\E_2$ of lowest dimension, given by $\{u_x=0, u_{tx}=0, u_{xx}=0, u_{xy}=0, v_{xx}=0, v_{xy}=0\}$, leads to the solution
 $$
u= f_1(t) +f_2(t)y,\
v= f_3(t)+f_4(t) x+f_5(t)y+\tfrac12(f_4(t)^2 +2f_2(t)f_4(t)+\dot f_4(t))y^2
 $$
which is $\mathcal{G}$-equivalent to $(u,v)\equiv(0,0)$.

\medskip

{\bf 2.} Consider the special value of the first invariant $I_1=0$. The arising system $u_{xy}+v_{xx}=0$ has a solution
$u=w_x, v=-w_y$. Substitution of this into the modified Manakov-Santini system reduces it to the prolongation
of the first equation from the universal hierarchy of Mart\'inez Alonso and Shabat  \cite{MASh}:
 \begin{equation}\label{MASh}
w_{tx}+w_xw_{xy}-w_yw_{xx}-w_{yy}=0.
 \end{equation}
In fact, the equations $F_1=0$ and $F_2=0$ are $x$- and $y$-derivatives of the left-hand side $F$ of \eqref{MASh},
so we get the PDE $F=f(t)$ and the function $f(t)$ can be eliminated by a point transformation.

Equation \eqref{MASh} possesses a Lax pair and so is integrable by the inverse scattering transform.
Its hierarchy carries an involutive $GL(2)$-structure \cite{FKGL2}, and so is also integrable by twistor methods.
The method of hydrodynamic reductions \cite{FKh} can be exploited to obtain solutions $w$ involving arbitrary functions of one argument.

\medskip

{\bf 3.} Consider a stronger ansatz for the modified Manakov-Santini equation: $I_1=0$, $I_2=0$, $I_3=0$,
in addition to $F_1=F_2=0$. This overdetermined system can be analyzed by the \textsf{rifsimp} package of \textsc{Maple}.
The main branch is equivalent to the constraint $u_{xx}=0,u_{xy}=0,v_{xx}=0$. This can be explicitly solved.

Modulo the pseudogroup $\mathcal{G}$ the general solution to this system is
 $$
u=x+e^y,\qquad v=f(t)+h(t)e^{-y}.
 $$
Degenerations include the solution
 $$
u=0,\qquad v=\tfrac1{12}y^4+xy+h(t)
 $$
which is a partial solution to the dKP.

\medskip

{\bf 4.} Finally, consider an ansatz obtained by the requirement that all structure coefficients $K_1,\dots,K_4$
of the frame $\nabla_i$ on $\E_\infty$ and the coefficient $I_2$, arising in the expression of $(g,\omega)$,
are constants.

By the last formulae in \S\ref{S3.3} this case corresponds to constancy of all differential invariants
obtained from $I_1,I_2,I_3$ by $\nabla_i$-derivations. Also note that in this case $(\nabla_1,\nabla_2,\nabla_3)$
form a 3-dimensional Lie algebra $\mathfrak{s}$.

The obtained system $F_1=0, F_2=0, K_1=k_1, K_2=k_2, K_3=k_3, K_4=k_4, I_2=c$ is inconsistent for generic parameters
in the right-hand sides. Using the differential syzygies between the invariants and derivations, we further constrain those values.
The obtained system can be solved in \textsc{Maple}.

Let us restrict to the case, when the corresponding algebra $\mathfrak{s}=\mathfrak{sl}(2,\R)$
(otherwise $\mathfrak{s}$ is solvable).
This corresponds to very particular values of the parameters: $I_1=-\frac3{25}$, $I_2=\frac{21}{100}$, $I_3=-\frac{147}{500}$,
$K_1=1$, $K_2=0$, $K_3=\frac9{50}$, $K_4=-\frac9{500}$.

Modulo the pseudogroup $\mathcal{G}$ the general solution to this system is
 $$
u=y^{2/3} -\tfrac{10}3 xy^{-1},\quad
v=\tfrac25 xy^{-1/3} -\tfrac73 x^2y^{-2} +\tfrac{21}{25} y^{4/3} +(f(t)y^{1/3}+h(t))y^2.
 $$
A degeneration of this family gives the following family of solutions
 $$
u=-\tfrac{10}3 xy^{-1},\quad
v=-\tfrac73 x^2y^{-2} +(f(t)y^{1/3}+h(t))y^2.
 $$


It shall be noted that we have essentially quotiented out the pseudogroup $\mathcal{G}$ (only the translation by $t$ remains in the latter cases)
because we integrated $\mathfrak{g}$ explicitly and have found a convenient cross-section of the action.

In the general case, when we impose an invariant differential constraint, the family of solutions can keep $\mathcal{G}$-invariance and the separation
of generic solutions can be done using the differential invariants obtained in \S\ref{S3.4}.

\appendix
\section{Symmetry of algebraic PDEs}\label{algebraicity}

Let $\E\subset J^\infty\pi$ be a differential equation.
It is called algebraic if for every $a\in E=J^0\pi$ and every $k\in\N$
the fiber $\E^k_a\subset J^k_a\pi$ is an algebraic variety (maybe reducible).
Here we use the natural algebraic structure in the fibers $J^k\pi\to E$.

Note that the definition of algebraic pseudogroup in \cite{KL2} used an assumption that $\mathcal{G}$ acts transitively on $J^0\pi$.
For instance, this is the case if the bundle is trivial $\pi:E=\R^n(x)\times\R^m(u)\to\R^n(x)$ and the defining equations of $\E$
do not depend on $x,u$. It is also the case for the modified Manakov-Santini system (\ref{MS}).
We will not however rely on it in the proof below.

 \begin{theorem}\label{algebraic}
The symmetry pseudogroup $\mathcal{G}$ of an algebraic differential equation $\E$ is algebraic.
This means that the defining Lie equations of $\mathcal{G}$ are algebraic.
 \end{theorem}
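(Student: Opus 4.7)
The plan is to establish algebraicity of the defining Lie equations of $\mathcal{G}$ at every finite jet order. Fix local coordinates $(x^i,u^\alpha)$ on $E=J^0\pi$, write a local diffeomorphism $\varphi$ of $E$ as $\tilde x^i=\varphi^i(x,u)$, $\tilde u^\alpha=\varphi^\alpha(x,u)$, and recall that the prolongation $\varphi^{(k)}\colon J^k\pi\to J^k\pi$ is determined by sending $k$-jets of sections to $k$-jets of their images. In coordinates the transformed fiber variables $\tilde u^\alpha_\sigma$ are rational functions of the $k$-jet $j^k_a\varphi$ at the base point $a$ and of the fiber coordinates $u^\beta_\tau$ of $J^k_a\pi$, the denominators being a power of the horizontal Jacobian $\Delta=\det\bigl(\partial_{x^j}\varphi^i+u^\alpha_j\,\partial_{u^\alpha}\varphi^i\bigr)$.

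First I would recast the symmetry condition in polynomial form. By definition, $\varphi\in\mathcal{G}$ iff $\varphi^{(k)}(\E^k_a)\subset\E^k_{\varphi(a)}$ for every $a\in E$ and every $k\in\N$, and since each fiber $\E^k_a$ is algebraic with polynomial defining equations $F_\ell$ this is equivalent to $F_\ell\circ\varphi^{(k)}\equiv 0$ on $\E^k_a$ for every $\ell$. Multiplying by a sufficiently high power of $\Delta$ clears denominators, so $P_\ell:=\Delta^N(F_\ell\circ\varphi^{(k)})$ becomes polynomial in the fiber coordinates $u^\beta_\tau$ of $J^k_a\pi$ and polynomial in the $k$-jet coordinates of $\varphi$ at $a$. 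Since $\Delta\ne 0$ is an algebraically open natural condition on local diffeomorphisms, this replacement preserves zero sets.

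The heart of the argument is then an ideal-membership reformulation. Fixing $a\in E$, the coordinate ring of the affine fiber $J^k_a\pi$ is a polynomial ring $R_a$, and $I_a:=I(\E^k_a)\subset R_a$ is finitely generated. View $P_\ell$ as an element of $R_a$ whose coefficients are polynomials in $j^k_a\varphi$; reducing $P_\ell$ modulo $I_a$ (for example via a Gr\"obner basis with respect to any monomial order on $R_a$) produces a normal form $\bar P_\ell$ whose coefficients are again polynomials in $j^k_a\varphi$. The symmetry condition $P_\ell|_{\E^k_a}\equiv 0$ is equivalent to vanishing of every such coefficient, which is a finite polynomial system in $j^k_a\varphi$. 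As $a$ varies, the defining polynomials of $\E^k_a$ vary algebraically in $a$ by hypothesis, so these pointwise equations glue into an algebraic subvariety of $J^k(\op{Diff}_{\text{loc}}(E))$ --- precisely the order-$k$ defining Lie equation of $\mathcal{G}$.

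The main obstacle is the rationality of the raw prolongation formulas, circumvented by the $\Delta^N$-clearing step together with restriction to the Zariski-open locus of invertible $k$-jets. A secondary subtlety, noted in the paragraph preceding the theorem, is that without transitivity of $\mathcal{G}$ on $E$ one cannot reduce to a single reference fiber once and for all; but since algebraicity of $\E$ forces the family of ideals $I(\E^k_a)$ to depend algebraically on $a$, the pointwise polynomial equations on $j^k_a\varphi$ fit together into a single global algebraic condition on the jet bundle of $\op{Diff}_{\text{loc}}(E)$, which is the desired statement.
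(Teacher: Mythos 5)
Your proposal is correct and follows essentially the same route as the paper: recast the symmetry condition as ideal membership $\varphi^*F_\ell\in I(\E^k_a)$, reduce via a Gr\"obner basis, and collect coefficients with respect to the fiber jet-variables to obtain algebraic (Lie) equations on the jet of $\varphi$. The only additions are your explicit clearing of the Jacobian denominators in the prolongation formulas and the remark on gluing over the base point $a$, both of which the paper leaves implicit.
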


In this formulation, by symmetries we mean either point or contact symmetries. The statement holds true also for mixed point-contact symmetries, as the ones appearing in the B\"acklund type theorem in \cite{AK}, and can be extended for generalized symmetries as those considered in \cite{AKO,KLV}.

 \begin{proof}
Without loss of generality we can assume $\E$ to be formally integrable, because addition of compatibility conditions does not change the symmetry.
The differential ideal $I_\E$ of the equation $\E$ is filtered by ideals $I_\E^i$ of functions on $J^i \pi$, and it is completely determined by
$I_\E^k$ for some $k$. By the assumption there exist generators $F_1,\dots,F_r$ of $I_\E^k$ that are algebraic in jet-variables $u_\sigma$, $|\sigma|>0$,
over any point $a=(x,u)\in E$, and from now on we restrict to a single point $a \in E$.

Let $\vp:E\to E$ be a local diffeomorphism (point transformation) with $\vp(a)=a$. It is a symmetry if $\vp^*F_i\in I_\E^k$ for every $i=1,\dots,r$, where we tacitly omitted the notation for prolongation.
For each $i$, the membership problem is algorithmically solvable by the Gr\"obner basis method, and the condition for membership is a set of algebraic relations. Unite those
by $i$. Decompose each relation by all jet-variables $u_\sigma$, $|\sigma|>0$ and collect the coefficients. This gives a finite number of algebraic differential equations
on the components of $\vp$. Their orders do not exceed the maximal order of $F_i$, because of the prolongations of $\vp$ involved.
This is the set of Lie equations defining $\mathcal{G}$, and the claim follows.

In the case of a contact diffeomorphism $\vp:J^1\pi\to J^1\pi$, when $u$ is one-dimensional, the decomposition has
to be done with respect to $u_\sigma$, $|\sigma|>1$. The rest of arguments is the same.
 \end{proof}



\begin{thebibliography}{11}
\footnotesize

\bibitem{ALV}
D. Alekseevskij, V. Lychagin, A. Vinogradov, {\it Basic ideas and concepts of differential geometry\/}, Encycl. math. sciences {\bf 28}, Geometry~1, Springer (1991).

\bibitem{AKO}
I.\,M. Anderson, N. Kamran, P. Olver, {\it Internal, external and generalized symmetries\/},
Adv. Math. {\bf 100}, 53--100 (1993).

\bibitem{AK}
I.\,M. Anderson, B. Kruglikov, {\it Rank 2 distributions of Monge equations:
symmetries, equivalences, extensions\/}, Adv. Math. {\bf 228}, issue 3, 1435--1465 (2011).

\bibitem{COSTH}
E. Calabi, P. Olver, C. Shakiban, A. Tannenbaum, S. Haker, {\it Differential and Numerically Invariant Signature Curves Applied to Object Recognition}, International Journal of Computer Vision, {\bf 26}, issue 2, 107-135 (1998).

\bibitem{Calderbank}
D.M.J. Calderbank, {\it Integrable background geometries}, SIGMA {\bf 10}, 034 (2014).

\bibitem{Cartan}
E. Cartan, {\it Sur une classe d'espaces de Weyl},  Ann. Sci. \'Ecole Norm. Sup. (3) {\bf 60}, 1-16 (1943).

\bibitem{DFK}
M. Dunajski, E.V. Ferapontov, B. Kruglikov, {\it On the Einstein-Weyl and conformal self-duality equations}, Journ. Math. Phys. {\bf 56}, 083501 (2015).

\bibitem{DMT}
M. Dunajski, L.J. Mason, P. Tod, {\it Einstein-Weyl geometry, the dKP equation and twistor theory}, Journal of Geometry and Physics {\bf 37}, no. 1-2, 63-93 (2001).

\bibitem{FKh}
E.V.\ Ferapontov, K.R.\ Khusnutdinova, {\it On the integrability of (2+1)-dimensional quasilinear systems},
Comm. Math. Phys. \textbf{248}, 187-206 (2004).

\bibitem{FK}
E.V.\ Ferapontov, B.S.\ Kruglikov,  {\it Dispersionless integrable systems in 3D and Einstein-Weyl geometry}, Journal of Differential Geometry  {\bf 97}, 215-254  (2014).

\bibitem{FKGL2}
E.V.\ Ferapontov, B.S.\ Kruglikov,  {\it Dispersionless integrable hierarchies and GL(2,R) geometry},
arXiv:1607.01966 (2016).

\bibitem{H}
N.J.\ Hitchin, {\it Complex manifolds and Einstein’s equations},
Twistor Geometry and Non-linear Systems, Lecture Notes in Math. {\bf 970} Springer (1982).

\bibitem{JT}
P.E. Jones, K.P. Tod, {\it Minitwistor spaces and Einstein-Weyl spaces}, Class. Quantum Grav. {\bf 2}, no. 4, 565-577 (1985).

\bibitem{KP}
B.B.\ Kadomtsev, V.I.\ Petviashvili, {\it On the stability of solitary waves in weakly dispersive media\/},
Sov.\ Phys.\ Dokl.\ {\bf 15}, 539-541 (1970).

\bibitem{KZ}
R.V.\ Khokhlov, E.A.\ Zabolotskaya, {\it Quasi-plane waves in the nonlinear acoustics of confined beams\/},
Sov.\ Phys.\ Acoust.\ {\bf 15}, 35–40 (1969).

\bibitem{KLV}
I. Krasilshchik, V. Lychagin, A. Vinogradov, {\it Geometry of jet spaces and nonlinear partial differential equations}, Gordon and Breach (1986).

\bibitem{KL1}
B. Kruglikov, V. Lychagin, {\it Geometry of Differential equations\/}, Handbook of Global Analysis, Ed. D.Krupka, D.Saunders, Elsevier, 725-772 (2008).

\bibitem{KL2}
B. Kruglikov, V. Lychagin, {\it Global Lie-Tresse theorem}, Selecta Mathematica {\bf 22}, 1357-1411 (2016).

\bibitem{KS}
B. Kruglikov, E. Schneider, {\it Differential invariants of self-dual conformal structures}, Journal of Geometry and Physics, {\bf 113}, 176-187 (2017).

\bibitem{LY}
V. Lychagin, V. Yumaguzhin, {\it Invariants in Relativity Theory}, Lobachevskii Journal of Mathematics {\bf 36}, no.3, 298-312 (2015).

\bibitem{MASh}
L.\ Mart\'inez Alonso, A.B.\ Shabat, {\it Energy-dependent potentials revisited: a universal hierarchy of hydrodynamic type\/},
Phys.\ lett.\ A {\bf 300}, 58-54 (2002).

\bibitem{MS}
S.V. Manakov, P.M. Santini, {\it Cauchy Problem on the Plane for the Dispersionless Kadomtsev-Petviashvili Equation}, JETP Lett. {\bf 83} (2006) 462-6.

\bibitem{O}
P. Olver, {\it Equivalence, Invariants, and Symmetry}, Cambridge University Press, Cambridge (1995).

\bibitem{T}
T.Y. Thomas, {\it The Differential Invariants of Generalized Spaces}, Cambridge University Press, Cambridge (1934).

\bibitem{W}
H. Weyl, {\it Reine Infinitesimalgeometrie}, Math. Z. {\bf 2}, 384-–411 (1918).

\end{thebibliography}
\end{document}